\newcommand{\cl}{\operatorname{cl}}
\newcommand{\ka}{\kappa}
\newtheorem{problem}{Problem}
\newtheorem{theorem}{Theorem}
\newtheorem{proposition}{Proposition}
\newtheorem{corollary}{Corollary}
\newtheorem{lemma}{Lemma}
\newtheorem{remark}{Remark}
\title[On regular $\kappa$-bounded spaces admitting only constant continuous mappings into...]{On regular $\kappa$-bounded spaces admitting only constant continuous mappings into $T_1$ spaces of pseudo-character $\leq \kappa$}
\author[S. Bardyla]{Serhii Bardyla}
\address{S.Bardyla: University of Vienna, Austria}
\thanks{The work of the first author is supported by the Austrian Science Fund FWF (Grant  I
3709 N35).}
\email{sbardyla@yahoo.com}
\author[A.~Osipov]{Alexander Osipov}
\address{A. Osipov: Krasovskii Institute of Mathematics and Mechanics, Ural Federal University, and Ural State
University of Economics, Yekaterinburg, Russia}
\email{oab@list.ru}
\begin{document}
\begin{abstract}
In this paper for each cardinal $\kappa$ we construct an infinite $\kappa$-bounded (and hence countably compact) regular space $R_{\kappa}$ such that for any $T_1$ space $Y$ of pseudo-character $\leq\kappa$, each continuous function $f:R_{\kappa}\rightarrow Y$ is constant. This result resolves two problems posted by Tzannes in Open Problems from Topology Proceedings~\cite{Prob} and extends results of Ciesielski and Wojciechowski~\cite{CW} and Herrlich~\cite{H}.
\end{abstract}
\maketitle

We shall follow the terminology of~\cite{Eng, Kun}. Throughout of this paper all cardinals are assumed to be infinite.

Regular spaces on which every continuous real-valued function (or, more generally, spaces on which every continuous function into a given space $Y$) is constant are of particular interest in general topology. Such spaces were constructed and investigated in~\cite{1, 2, CW, 3, 3.5, H, 4, 5, KP, 6, 7, Tz, 8}. For instance, a well-known result of Herrlich~\cite{H} states the following:
\begin{theorem}[{\cite[Theorem]{H}}]\label{Her}
Let $Y$ be a topological space. The following conditions are equivalent:
\begin{itemize}
\item $Y$ is a $T_1$-space;
\item there exists a regular space $X$ (having at least two points), such that every
continuous map from $X$ to $Y$ is constant.
\end{itemize}
\end{theorem}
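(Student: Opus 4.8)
The plan is to prove the two implications separately; the implication from the existence of $X$ to ``$Y$ is $T_1$'' is a short warm-up, while the construction of $X$ from the $T_1$ hypothesis is the real work.

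For the direction asserting that a regular witness forces $Y$ to be $T_1$, I argue by contraposition. If $Y$ is not $T_1$ then some singleton is not closed, so there are distinct points $a,b\in Y$ with $a\in\cl\{b\}$. The two-point subspace $\{a,b\}$ is then either the Sierpi\'nski space (when $b\notin\cl\{a\}$) or the indiscrete space (when $a$ and $b$ are topologically indistinguishable). In the Sierpi\'nski case a continuous surjection of a space $X$ onto $\{a,b\}$ is literally the choice of a nonempty proper open subset of $X$, and in the indiscrete case \emph{every} two-valued function on $X$ is continuous. Since a regular space is $T_1$, any regular space with at least two points has a nonempty proper open subset, so in either case it admits a nonconstant continuous map into $Y$. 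Hence no regular space can serve as the witness in the second condition, which is exactly the contrapositive we wanted.

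The substantive direction assumes $Y$ is $T_1$ and must produce a regular space $X$, with at least two points, carrying only constant maps into $Y$. My plan is to force constancy one pair of points at a time by means of a \emph{connector}: a regular space $C$ with two marked points $p,q$ such that every continuous $f\colon C\to Y$ has $f(p)=f(q)$. Because $Y$ is $T_1$, it suffices to arrange that $f(p)$ and $f(q)$ are topologically indistinguishable in $Y$, since indistinguishable points of a $T_1$ space coincide. The engine behind the connector is a stabilization (pressing-down) argument: fix a cardinal $\mu\ge\w$ with $\psi(Y)\le\mu$ and put $\la=\mu^+$, a regular cardinal exceeding $\mu$. If $S=\{s_\alpha:\alpha<\la\}$ is a ``spine'' of order type $\la$ converging to a point $t$ and $f\colon S\cup\{t\}\to Y$ is continuous, then writing $\{f(t)\}=\bigcap_{i<\mu}U_i$ with each $U_i$ open and using that each $f^{-1}(U_i)$ contains a tail of $S$, the regularity of $\la$ forces a single tail $(\alpha^*,\la)$ on which $f\equiv f(t)$. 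Thus continuous maps into $Y$ are eventually constant along any such spine, with eventual value equal to the value at the top.

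The connector is then assembled from two such spines whose tails are interwoven so that their (forced) eventual values are a common value $f(p)=f(q)$; this interweaving, carried out while keeping the space regular, is the delicate point, since $p$ and $q$ can never be made indistinguishable in the $T_1$ space $C$ itself. Granting the connector, I would build $X$ by transfinite recursion: start from a regular space with two points, and at successor stages glue in a connector across a pair of points not yet forced to agree, taking unions at limits, until a fixed point is reached in which every pair of points is joined by a connector. Any continuous $f\colon X\to Y$ is then constant across each connected pair, hence globally constant, and $X$ has at least two points by construction. I expect the main obstacles to be exactly the two regularity verifications: designing the interwoven spines of the connector so that $C$ is regular yet $p,q$ are inseparable by maps into $Y$, and checking that regularity survives the transfinite amalgamation.
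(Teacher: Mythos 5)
Your first implication is correct and coincides with the paper's Theorem~\ref{main1}: choosing $p_2\in\cl_Y(\{p_1\})$ and a nonempty proper open $U\subset X$, the map sending $U$ to $p_1$ and $X\setminus U$ to $p_2$ is continuous and nonconstant, and regularity (which in this paper, following Engelking, subsumes $T_1$) guarantees such a $U$ exists once $X$ has two points.

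For the converse your architecture matches the paper's (and Herrlich's): a stabilization lemma along a spine of length $\psi(Y)^+$, a two-pointed connector forcing $f(p)=f(q)$, and an iterated amalgamation. The genuine gap is that the connector is never constructed: you explicitly defer the ``interweaving'' of the two spines and both regularity verifications, and these are not routine bookkeeping but the entire content of the theorem --- any direct identification of cofinal pieces of two spines ending at $p$ and $q$ either destroys regularity (a regular space with two points is Hausdorff under the paper's conventions, so $p$ and $q$ must be forced to agree ``at a distance'') or fails to force a common value. Concretely, the paper's connector (inside the proof of Theorem~\ref{main}) is a bi-infinite chain of punctured Tychonoff planks $T_{\kappa^{++},\kappa^{+++}}{\times}\mathbb Z$ in which consecutive planks share alternately their top edges and their right edges, with endpoints $a,b$ attached at $\mp\infty$; Lemma~\ref{l1} (the plank form of your spine lemma) forces a single value $y_n$ on a corner of each plank, the shared edges propagate one value $y$ along the whole chain, and then $\{a,b\}\subset\cl(D)\subset f^{-1}(y)$ for the union $D$ of the corners --- so $f(a)=f(b)$ comes directly from closedness of $\{y\}$, with no appeal to indistinguishability. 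Two further points your sketch glosses over: your spine lemma is stated with the limit point $t$ inside the domain, which is the easy case, whereas on the punctured plank the top point is absent and the paper must invoke Fodor's Lemma (Lemma~\ref{l0}, case~2); and the amalgamation need not be an open-ended transfinite search for a fixed point --- Herrlich's $\omega$-step tower $H_1=X$, $H_{n+1}=X(H_n)$ (attach a copy of the connector at every point, identifying all the $b$-ends) already suffices, with regularity inherited as recorded in Remark~\ref{rem}.
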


Also, Ciesielski and Wojciechowski in~\cite{CW} proved the following:
\begin{theorem}[{\cite[Theorem 7]{CW}}]\label{CW}
For any uncountable cardinal $\kappa$ there exists a regular space $Y$ of cardinality $\kappa$ such that any continuous function from $Y$ into any Hausdorff space $Z$ with a countable pseudo-character is constant.
\end{theorem}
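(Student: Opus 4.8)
The plan is to isolate a \emph{target-independent} property of the domain that forces constancy uniformly over the entire class of Hausdorff spaces of countable pseudo-character, and then to realize that property by a recursion of length $\ka$. The point of passing to such a property is that the single space $Y$ must defeat \emph{all} admissible targets $Z$ at once, so the obstruction to a non-constant map should be phrased inside $Y$ alone.

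First I would record exactly how a non-constant map can arise. Suppose $f\colon Y\to Z$ is continuous, $Z$ is Hausdorff with $\psi(Z)\le\w$ (so each singleton of $Z$ is a $G_\delta$), and $f$ is non-constant. For each $z$ in the image write $\{z\}=\bigcap_{n\in\w}U^{z}_n$ with $U^{z}_n$ open and decreasing; intersecting initial terms with Hausdorff separations one may assume $U^{z}_n\cap U^{z'}_m=\emptyset$ for $z\ne z'$. Pulling back, $Y=\bigsqcup_{z} f^{-1}(z)$ is a partition into at least two nonempty $G_\delta$ fibres $F_z=\bigcap_n f^{-1}(U^{z}_n)$ such that for $z\ne z'$ the decreasing open families $P^{z}_n:=f^{-1}(U^{z}_n)$ and $P^{z'}_n:=f^{-1}(U^{z'}_n)$ satisfy $P^{z}_n\cap P^{z'}_n=\emptyset$ and shrink to $F_z,F_{z'}$. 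Call a partition of $Y$ into two or more nonempty $G_\delta$ classes, pairwise enclosed in such termwise-disjoint shrinking open tubes, a \emph{$G_\delta$-tube partition}. The computation shows: if $Y$ admits \emph{no} $G_\delta$-tube partition, then every continuous map of $Y$ into a Hausdorff space of countable pseudo-character is constant. Note this is not in conflict with regularity: although two disjoint nonempty open sets are trivially two disjoint $G_\delta$ sets, a $G_\delta$-tube \emph{partition} must cover all of $Y$ with its classes and keep every pair of them tube-separated, which is a genuinely global demand (in $\IR$, for instance, the fibres of the identity do form such a partition).

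I would then build $Y$ by recursion of length $\ka$. Begin with a regular space on a set of size $\ka$, and at successive stages attach to a pair $\{x,y\}$ that is not yet ``merged'' a small regular connecting gadget of size $\le\ka$, whose neighbourhood structure is arranged so that no $G_\delta$-tube partition can place $x$ and $y$ in different classes; that is, the gadget forces $f(x)=f(y)$ for every admissible $f$. A bookkeeping enumeration of all pairs, including those created by earlier gadgets, ensures that after $\ka$ stages every pair is merged, so no nontrivial $G_\delta$-tube partition survives, while $Y$ still has at least two points. Since each gadget has size $\le\ka$ and there are $\ka$ stages, $\ka\cdot\ka=\ka$ keeps $|Y|=\ka$.

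The main obstacle is controlling three competing tensions inside one space. The merging requirement pushes the topology toward being ``$G_\delta$-irreducible'', which is antithetical to the strong separation of points from closed sets demanded by regularity; each gadget must therefore obstruct only the shrinking of open tubes down to \emph{disjoint} fibres -- an obstruction visible solely to targets of countable pseudo-character -- while still allowing points and closed sets to be separated. The natural device is to give the gadget points neighbourhood bases of uncountable pseudo-character, so that no countable family of open sets can pin such a point into a single tube. I expect the technically hardest parts to be (i) verifying that attaching a new gadget never resurrects a $G_\delta$-tube separation of a previously merged pair, i.e.\ permanence of inseparability under the recursion, and (ii) checking that regularity is preserved at every successor stage and survives the limit.
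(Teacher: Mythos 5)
First, note that the paper does not prove Theorem~\ref{CW} at all: it is quoted from Ciesielski--Wojciechowski \cite{CW} as a known result, so there is no internal proof to compare against. Judged on its own, your proposal has the right architecture --- it is essentially the architecture of the paper's own Theorem~\ref{main}: find a ``connecting gadget'' $G$ with two distinguished points $x,y$ such that every continuous map of $G$ into an admissible target identifies $x$ with $y$, then iterate the attachment of such gadgets (Herrlich-style) until all pairs are merged, taking unions with the weak topology to preserve regularity and the merging already achieved. But the entire content of the theorem lives in the step you leave unconstructed: exhibiting a \emph{regular} gadget of cardinality $\le\kappa$ (in particular of cardinality $\aleph_1$ when $\kappa=\aleph_1$) that forces $f(x)=f(y)$ for every continuous $f$ into every Hausdorff space of countable pseudo-character. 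Saying that the gadget points should have ``neighbourhood bases of uncountable pseudo-character'' is not a construction, and it does not by itself obstruct anything: the fibres of $f$ are $G_\delta$ \emph{sets}, not points, so large pseudo-character of points of $Y$ is compatible with $x$ and $y$ lying in different fibres. The known gadgets of this kind (the punctured Tychonoff plank used in Lemma~\ref{l1} of this paper, or van Douwen/Mysior-type planes) either are too large ($T_{\omega_1,\omega_2}$ has cardinality $\aleph_2$) or require real work to verify; producing one of size exactly $\kappa$ for every uncountable $\kappa$ is precisely what \cite{CW} accomplishes. So the proposal is a plan, not a proof, and the gap is the central lemma rather than a peripheral verification.

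A secondary issue is your ``$G_\delta$-tube partition'' reduction. For a single pair $z\ne z'$ in the image you can indeed arrange termwise-disjoint shrinking tubes, but doing so \emph{simultaneously} for all pairs of an arbitrary (possibly uncountable) image while keeping each family countable is not justified as written. This packaging is also unnecessary: the reduction actually used in such arguments is pairwise --- it suffices that for every pair $a,b\in Y$ and every continuous $f$ into an admissible $Z$ one has $f(a)=f(b)$, and the gadget argument only ever invokes the $G_\delta$-ness of the two singletons $\{f(x)\},\{f(y)\}$ together with Hausdorffness to separate them (compare the proof of Lemma~\ref{l1}). I would replace the global tube-partition property by this pairwise merging property, which is also exactly what your recursion delivers.
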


However, all known examples of regular spaces on which every continuous real-valued function is constant are far from being countably compact. In~\cite{Tz} Tzannes constructed a Hausdorff countably compact space $T$ on which every continuous real-valued function is constant. Nevertheless, the space $T$ is strongly non-Urysohn. In particular, no pair of distinct points of $T$ have disjoint closed neighborhoods.
In~\cite{Prob} Tzannes posed the following two problems:

\begin{problem}[{\cite[Problem C65]{Prob}}]\label{p1}
Does there exist a regular (first countable, separable)
countably compact space on which every continuous real-valued function is constant?
\end{problem}

\begin{problem}[{\cite[Problem C66]{Prob}}]\label{p2}
Does there exist, for every Hausdorff space $R$, a regular
(first countable, separable) countably compact space on which every continuous
function into $R$ is constant?
\end{problem}

Let $\kappa$ be a cardinal. A topological space $X$ is called {\em $\kappa$-bounded} if the the closure of each subset $A\subset X$ of cardinality $\leq \kappa$ is compact. It is clear that each $\kappa$-bounded space is countably compact and each $\kappa$-bounded space of density $\leq \kappa$ is compact.

The pseudo-character $\psi(X)$ of a space $X$ is the smallest cardinal $\lambda$ such that each point is the intersection of a family of cardinality $\leq\lambda$ of sets which are open in $X$.

In this paper for each cardinal $\kappa$ we construct an infinite $\kappa$-bounded regular space $R_{\kappa}$ such that for each $T_1$ space $Y$ of pseudo-character $\leq\kappa$, each continuous function $f:R_{\kappa}\rightarrow Y$ is constant. This result resolves Problems~\ref{p1},~\ref{p2} and extends Theorems~\ref{Her},~\ref{CW}.

\section{Wallman $\kappa$-bounded extension}
For a subset $A$ of a topological space $X$ by $\cl_{X}(A)$ (or simply $\overline{A}$) we denote the closure of $A$ in $X$.

We recall \cite[\S3.6]{Eng} that the Wallman extension $W(X)$ of a $T_1$ space $X$ consists of closed ultrafilters, i.e., families $\mathcal{F}$ of closed subsets of $X$ satisfying the following conditions:
\begin{itemize}
\item $\emptyset\notin\mathcal{F}$;
\item $A\cap B\in\mathcal{F}$ for any $A,B\in\mathcal{F}$;
\item a closed set $F\subset X$ belongs to $\mathcal F$ if $F\cap A\ne\emptyset$ for every $A\in\mathcal{F}$.
\end{itemize}
For any $A\subset X$ put $$\langle A\rangle=\{\mathcal F\in W(X)\mid \hbox{ there exists } F\in\mathcal F \hbox{ such that }F\subset A\}.$$
The Wallman extension $W(X)$ of $X$ carries the topology generated by the base consisting of the sets
$\langle U\rangle$ where $U$ runs over open subsets of $X$.

By Theorem~\cite[3.6.21]{Eng}, the Wallman extension $W(X)$ is $T_1$ and compact. By Theorem~\cite[3.6.22]{Eng} a $T_1$-space $X$ is normal if and only if $W(X)$ is Hausdorff.

Consider the map $j_X:X\to W(X)$ assigning to each point $x\in X$ the principal closed ultrafilter consisting of all closed sets $F\subset X$ containing the point $x$. It is clear that the image $j_X(X)$ is dense in $W(X)$. Since the space $X$ is $T_1$, Theorem~3.6.21 from~\cite{Eng} provides that the map $j_X:X\to W(X)$ is a topological embedding. So, $X$ can be identified with the subspace $j_X(X)$ of $W(X)$.


\begin{lemma}\label{W}
Let $\mathcal{F}\in W(X)$ and $A\subset X$. Then $\mathcal{F}\in \cl_{W(X)}(A)$ if and only if $\cl_X(A)\in\mathcal{F}$.
\end{lemma}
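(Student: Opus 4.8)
The plan is to prove the biconditional directly, using the definition of the Wallman topology via the basic open sets $\langle U\rangle$ for $U$ open in $X$. I would first reformulate membership in the closure: a point $\mathcal F \in W(X)$ fails to lie in $\cl_{W(X)}(A)$ precisely when some basic open neighborhood $\langle U\rangle$ of $\mathcal F$ misses $A$, where throughout I identify $A \subset X$ with its image $j_X(A) \subset W(X)$. So the natural strategy is to prove the contrapositive of each direction, translating statements about basic neighborhoods of $\mathcal F$ into statements about the closed sets belonging to $\mathcal F$.

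First I would establish a usable criterion for when a basic open set contains a principal ultrafilter: for $x \in X$ one has $j_X(x) \in \langle U\rangle$ if and only if $x \in U$. Indeed, $j_X(x)$ consists of all closed sets containing $x$, so it contains some $F \subset U$ exactly when $x$ has a closed neighborhood inside $U$; since $X$ is $T_1$ this amounts to $x\in U$ (taking $F = \{x\}$ suffices for one direction, and $F \subset U$ forces $x \in U$ for the other). Consequently, for a basic open set $\langle U\rangle$, the trace $\langle U\rangle \cap j_X(X)$ corresponds exactly to $U$. This is the bookkeeping lemma that lets me move freely between the two pictures.

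For the forward direction I would argue contrapositively: suppose $\cl_X(A) \notin \mathcal F$. Since $\mathcal F$ is a closed ultrafilter, the third defining condition says the closed set $\cl_X(A)$ belongs to $\mathcal F$ as soon as it meets every member of $\mathcal F$; failing membership therefore yields some $F \in \mathcal F$ with $F \cap \cl_X(A) = \emptyset$. Then $U := X \setminus F$ is an open set containing $A$, and I claim $\langle U\rangle$ is a neighborhood of $\mathcal F$ missing $A$. It is a neighborhood because $F \in \mathcal F$ and $F \subset U$ would be false — here I must be careful: $F \not\subset U$, so instead I should take $U := X \setminus \cl_X(A)$, which contains $F$, giving $F \subset U$ and hence $\mathcal F \in \langle U\rangle$; and since $A \subset \cl_X(A) = X \setminus U$, the trace of $\langle U\rangle$ on $X$ avoids $A$ by the bookkeeping lemma. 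Thus $\mathcal F \notin \cl_{W(X)}(A)$.

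For the reverse direction, again contrapositively, suppose $\mathcal F \notin \cl_{W(X)}(A)$, so there is a basic open $\langle U\rangle \ni \mathcal F$ with $\langle U\rangle \cap A = \emptyset$, i.e. $U \cap A = \emptyset$ by the bookkeeping lemma, whence $A \subset X \setminus U$ and so $\cl_X(A) \subset X \setminus U$. On the other hand $\mathcal F \in \langle U\rangle$ gives some $F \in \mathcal F$ with $F \subset U$; then $F \cap \cl_X(A) = \emptyset$, and since $\mathcal F$ is a filter closed under the meet condition, $\cl_X(A) \in \mathcal F$ would force $F \cap \cl_X(A) \in \mathcal F$, contradicting $\emptyset \notin \mathcal F$. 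Hence $\cl_X(A) \notin \mathcal F$. The only genuinely delicate point, which I expect to be the main obstacle, is the careful translation between the trace of a basic open set on $X$ and membership of complements of closed sets in the ultrafilter — in particular getting the right open set $U$ so that simultaneously $F \subset U$ holds for some $F \in \mathcal F$ and $U$ is disjoint from $A$; the $T_1$ hypothesis and the ultrafilter closure conditions are exactly what make these two requirements compatible.
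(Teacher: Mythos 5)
Your proof is correct and follows essentially the same route as the paper: the forward direction uses the maximality clause of a closed ultrafilter to produce the neighborhood $\langle X\setminus \cl_X(A)\rangle$ missing $A$, and the backward direction (which you state contrapositively, the paper directly) rests on the same two facts — that $F\subset U$ for some $F\in\mathcal F$ forces $F\cap\cl_X(A)\in\mathcal F$ to be nonempty, and that an open $U$ meets $A$ iff it meets $\cl_X(A)$. Your explicit ``bookkeeping lemma'' about $j_X(x)\in\langle U\rangle\iff x\in U$ is used silently in the paper, and your self-correction from $U=X\setminus F$ to $U=X\setminus\cl_X(A)$ lands on the right choice.
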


\begin{proof}
$(\Rightarrow)$ To derive a contradiction assume that $\mathcal{F}\in \cl_{W(X)}(A)$, but $\cl_X(A)\notin \mathcal{F}$. Using the maximality of $\mathcal{F}$ we can find an element $F\in \mathcal{F}$ such that $F\subset X\setminus \cl_X(A)$.
Then $\langle X\setminus \cl_X(A)\rangle$ is an open neighborhood of $\mathcal{F}$ such that $\langle X\setminus \cl_X(A)\rangle\cap A=\emptyset$ which implies a contradiction.

$(\Leftarrow)$ Assume that $\cl_X(A)\in \mathcal{F}$ and fix any open neighborhood $\langle U\rangle$ of $\mathcal{F}$. Then there exists an element $H\in \mathcal{F}$ such that $H\subset U$. Observe that $\cl_X(A)\cap H\in\mathcal{F}$ and $\cl_X(A)\cap H\subset U$. Hence $A\cap \langle U\rangle\neq \emptyset$ witnessing that $\mathcal{F} \in \cl_{W(X)}(A)$.
\end{proof}

Given a cardinal $\kappa$, in the Wallman extension $W(X)$ of a $T_1$-space $X$, consider the subspace
$$W_{\kappa}X={\textstyle\bigcup}\{\cl_{W(X)}(C)\mid C\subset X \hbox{ and }|C|\le\kappa\}$$ of $W(X)$. The space $W_{\kappa}(X)$ is called the {\em Wallman $\kappa$-bounded extension} of $X$. The Wallman $\kappa$-bounded extension was introduced and investigated in~\cite{BBRk}. In particular,
there it was proved the following:
\begin{proposition}[{\cite[Proposition 3.2]{BBRk}}]\label{BBR}
For any $T_1$ space $X$, the space $W_{\kappa}(X)$ is $\kappa$-bounded.
\end{proposition}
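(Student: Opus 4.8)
The plan is to reduce $\kappa$-boundedness of $W_{\kappa}(X)$ to compactness of $W(X)$ by showing that, for a subset $A$ of size $\leq\kappa$, the entire closure of $A$ already lives inside a single set of the form $\cl_{W(X)}(C)$ with $|C|\le\kappa$. The crucial tool is Lemma~\ref{W}, which translates membership in a closure into membership of a closed set in a closed ultrafilter, together with the elementary observation that closed ultrafilters are upward closed among closed sets (if $F\in\mathcal F$ and $F\subset G$ with $G$ closed, then $G\cap B\supset F\cap B\ne\emptyset$ for every $B\in\mathcal F$, so $G\in\mathcal F$ by the defining maximality condition).

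First I would fix an arbitrary $A\subset W_{\kappa}(X)$ with $|A|\le\kappa$. By the very definition of $W_{\kappa}(X)$, for each $\mathcal F\in A$ there is a set $C_{\mathcal F}\subset X$ with $|C_{\mathcal F}|\le\kappa$ and $\mathcal F\in\cl_{W(X)}(C_{\mathcal F})$. I would then form the union $C={\textstyle\bigcup}\{C_{\mathcal F}\mid\mathcal F\in A\}$. Since $\kappa$ is infinite and $|A|\le\kappa$, cardinal arithmetic gives $|C|\le\kappa\cdot\kappa=\kappa$, so $\cl_{W(X)}(C)$ is one of the sets whose union defines $W_{\kappa}(X)$; in particular $\cl_{W(X)}(C)\subset W_{\kappa}(X)$.

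Next I would verify that $A\subset\cl_{W(X)}(C)$. For each $\mathcal F\in A$, Lemma~\ref{W} applied to $C_{\mathcal F}$ yields $\cl_X(C_{\mathcal F})\in\mathcal F$. Since $C_{\mathcal F}\subset C$ we have $\cl_X(C_{\mathcal F})\subset\cl_X(C)$, and upward closedness of $\mathcal F$ forces $\cl_X(C)\in\mathcal F$; applying Lemma~\ref{W} in the other direction gives $\mathcal F\in\cl_{W(X)}(C)$. As $\cl_{W(X)}(C)$ is closed in $W(X)$ and contains $A$, it follows that $\cl_{W(X)}(A)\subset\cl_{W(X)}(C)\subset W_{\kappa}(X)$.

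Finally, because the $W(X)$-closure of $A$ is wholly contained in $W_{\kappa}(X)$, the closure of $A$ computed in the subspace $W_{\kappa}(X)$ coincides with $\cl_{W(X)}(A)$. The latter is a closed subset of $W(X)$, which is compact by Theorem~\cite[3.6.21]{Eng}, hence compact. Therefore $\cl_{W_{\kappa}(X)}(A)$ is compact, and $W_{\kappa}(X)$ is $\kappa$-bounded. I expect the only subtle point to be the observation isolated in the previous paragraph: one must check that the ambient closure does not escape $W_{\kappa}(X)$, for only then does subspace compactness come for free from the compactness of the full Wallman extension; the cardinality bookkeeping $\kappa\cdot\kappa=\kappa$ and the upward closedness of closed ultrafilters are the routine ingredients that make this work.
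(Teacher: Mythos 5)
Your argument is correct: the union $C=\bigcup\{C_{\mathcal F}\mid\mathcal F\in A\}$ still has cardinality $\le\kappa$, the upward closedness of closed ultrafilters together with Lemma~\ref{W} shows $A\subset\cl_{W(X)}(C)\subset W_{\kappa}(X)$, and then $\cl_{W_{\kappa}(X)}(A)=\cl_{W(X)}(A)$ is a closed, hence compact, subset of the compact space $W(X)$. The present paper does not reproduce a proof of this proposition but imports it from~\cite{BBRk}; your argument uses exactly the tools the paper makes available (Lemma~\ref{W} and the compactness of $W(X)$) and is the natural proof of the statement, so there is nothing to object to.
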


For a topological space $X$ by $2^X$ we denote the family of all closed subsets of $X$.
Let $\mathcal{C}$ be any subfamily of $2^X$.
Following~\cite{BBRk}, a topological space $X$ is called {\em totally $\mathcal{C}$-normal} if for any disjoint sets $A\in\mathcal{C}$ and $B\in 2^X$ there exist disjoint open sets $U,V\subset X$ such that $A\subset U$ and $B\subset V$.

Let $\kappa$ be a cardinal. A topological space $X$ is called {\em totally $\overline{\kappa}$-normal} if it is totally $\mathcal C$-normal for the family $\mathcal C$ of closed subsets of the closures of subsets of cardinality $\le\kappa$ in $X$.

\begin{proposition}[{\cite[Proposition 2.9]{BBRk}}]\label{BBR1}
Each $\kappa$-bounded regular space $X$ is totally $\overline{\kappa}$-normal.
\end{proposition}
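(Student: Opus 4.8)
The plan is to peel off the definitions and reduce the claim to the classical separation of a \emph{compact} set from a disjoint closed set in a regular space, keeping in mind that the force of the word ``total'' is to let the second set range over \emph{all} closed subsets of $X$. So I would start by fixing $A\in\mathcal{C}$ and a closed set $B\in 2^X$ with $A\cap B=\emptyset$, and unravel the membership $A\in\mathcal{C}$ into: there is $C\subset X$ with $|C|\le\kappa$ such that $A$ is a closed subset of $\cl_X(C)$. The crucial discipline is that I may use no smallness or compactness of $B$; all the work must be carried out on the side of $A$.

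First I would establish that $A$ is compact. Since $X$ is $\kappa$-bounded and $|C|\le\kappa$, the closure $\cl_X(C)$ is compact, and $A$, being closed in $X$ and contained in $\cl_X(C)$, is a closed subspace of a compact space, hence compact. This is the sole place where $\kappa$-boundedness enters, and it is exactly what upgrades bare regularity to the required separation.

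Next I would run the asymmetric compact-versus-closed argument. For each $a\in A$ we have $a\notin B$ with $B$ closed, so regularity of $X$ yields an open $W_a\ni a$ whose closure misses $B$, that is $\cl_X(W_a)\cap B=\emptyset$. As $A$ is compact, finitely many $W_{a_1},\dots,W_{a_n}$ cover $A$; setting $U=\bigcup_{i=1}^n W_{a_i}$ and $V=X\setminus\bigcup_{i=1}^n\cl_X(W_{a_i})$, both sets are open, $A\subset U$, $B\subset V$ (since $B$ avoids every $\cl_X(W_{a_i})$), and $U\cap V=\emptyset$ (since $U\subset\bigcup_{i=1}^n\cl_X(W_{a_i})$). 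These $U,V$ witness total $\overline{\kappa}$-normality. I would also note in passing that the very same $U,V$ separate $A$ from any set $B'$ with $\cl_X(B')\cap A=\emptyset$, by applying the argument to $\cl_X(B')$ in place of $B$; thus the proof is unaffected if one prefers the formulation with a merely separated, not necessarily closed, second set.

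I do not anticipate a genuine obstacle once compactness of $A$ is in hand; the only delicate point is to insist that each $W_a$ has \emph{closure} disjoint from $B$, so that the finite subcover of the compact set $A$ alone produces the separation and no hypothesis whatsoever on the arbitrary closed set $B$ is ever invoked. It is precisely this asymmetry — compactness guaranteed only on the $\mathcal{C}$ side, with the other side an arbitrary closed (or separated) set — that distinguishes the ``total'' conclusion from the weaker statement separating two closures of small sets.
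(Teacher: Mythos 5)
Your proof is correct: $\kappa$-boundedness makes $\cl_X(C)$, and hence its closed subset $A$, compact, and the standard compact-versus-closed separation in a regular space (choosing each $W_a$ with $\cl_X(W_a)\cap B=\emptyset$ and passing to a finite subcover of $A$) then produces the disjoint open sets without any hypothesis on the arbitrary closed set $B$ --- exactly the asymmetry the ``total'' formulation requires. The paper itself gives no proof, importing the statement from \cite[Proposition 2.9]{BBRk}, and your argument is the standard one underlying that cited result, so there is nothing to bridge.
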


\begin{proposition}\label{Wk}
The Wallman $\kappa$-bounded extension $W_{\kappa}(X)$ of $X$ is regular iff $X$ is totally $\overline{\kappa}$-normal.
\end{proposition}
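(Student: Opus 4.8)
The plan is to prove the two implications separately, using as the main technical device the behaviour of closures with respect to the base $\{\langle U\rangle : U\text{ open in }X\}$ of $W(X)$. The computation on which everything hinges is the inclusion $\cl_{W(X)}(\langle V\rangle)\subseteq\langle\cl_X(V)\rangle$ for every open $V\subseteq X$; once this is in hand, both directions follow by combining it with Lemma~\ref{W} and Proposition~\ref{BBR}. I would first record an auxiliary fact: if $A$ is a closed subset of $\cl_X(C)$ with $|C|\le\kappa$, then $\cl_{W(X)}(A)\subseteq W_\kappa(X)$, so that the closures of such sets computed in $W(X)$ and in $W_\kappa(X)$ agree. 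Indeed, by Lemma~\ref{W} a point $\F$ lies in $\cl_{W(X)}(A)$ iff $A\in\F$; since $A\subseteq\cl_X(C)$ and closed ultrafilters are upward closed among closed sets, this forces $\cl_X(C)\in\F$, whence $\F\in\cl_{W(X)}(C)\subseteq W_\kappa(X)$.

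For the implication ``$X$ totally $\overline{\kappa}$-normal $\Ra$ $W_\kappa(X)$ regular'', I would verify the standard criterion: given $\F\in W_\kappa(X)$ and a basic neighbourhood $\langle U\rangle$ of $\F$, I must produce an open $V\subseteq X$ with $\F\in\langle V\rangle$ and $\cl_{W_\kappa(X)}(\langle V\rangle)\subseteq\langle U\rangle$. From $\F\in\langle U\rangle$ choose $F\in\F$ with $F\subseteq U$, and from $\F\in W_\kappa(X)$ choose $C$ with $|C|\le\kappa$ and $\cl_X(C)\in\F$. Then $D:=F\cap\cl_X(C)\in\F$ is a closed subset of $\cl_X(C)$, so $D$ belongs to the family $\mathcal C$ witnessing total $\overline{\kappa}$-normality, and $D\subseteq U$. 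Applying total $\overline{\kappa}$-normality to the disjoint pair $D$ and $X\setminus U$ yields disjoint open $V,W$ with $D\subseteq V$ and $X\setminus U\subseteq W$, so that $\cl_X(V)\subseteq X\setminus W\subseteq U$. Now $D\in\F$ gives $\F\in\langle V\rangle$, while the key inclusion gives $\cl_{W(X)}(\langle V\rangle)\subseteq\langle\cl_X(V)\rangle\subseteq\langle U\rangle$; intersecting with $W_\kappa(X)$ completes this direction.

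For the converse ``$W_\kappa(X)$ regular $\Ra$ $X$ totally $\overline{\kappa}$-normal'', I would argue directly. Let $A$ be a closed subset of some $\cl_X(C)$ with $|C|\le\kappa$ and let $B\in 2^X$ be disjoint from $A$. By the auxiliary fact $\cl_{W_\kappa(X)}(A)=\cl_{W(X)}(A)$, and since $A\subseteq\cl_X(C)$ this is a closed subset of $\cl_{W_\kappa(X)}(C)$, which is compact by Proposition~\ref{BBR}; hence $\cl_{W_\kappa(X)}(A)$ is compact. Lemma~\ref{W} shows that $\cl_{W_\kappa(X)}(A)$ and $\cl_{W_\kappa(X)}(B)$ are disjoint, since any common point $\F$ would contain both $A$ and $B$, hence the empty set $A\cap B$. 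In a regular space a compact set and a disjoint closed set admit disjoint open neighbourhoods $\mathcal U,\mathcal V$ in $W_\kappa(X)$; restricting these to $X$ gives disjoint open sets $\mathcal U\cap X$ and $\mathcal V\cap X$ separating $A$ and $B$.

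The step I expect to require the most care is the closure computation $\cl_{W(X)}(\langle V\rangle)\subseteq\langle\cl_X(V)\rangle$, together with the bookkeeping that replaces closures in $W(X)$ by closures in $W_\kappa(X)$. I would prove the inclusion by contradiction: if $\cl_X(V)\notin\mathcal G$ for some $\mathcal G\in\cl_{W(X)}(\langle V\rangle)$, then maximality of the closed ultrafilter yields $G_0\in\mathcal G$ with $G_0\cap\cl_X(V)=\emptyset$, so that $\langle X\setminus\cl_X(V)\rangle$ is a neighbourhood of $\mathcal G$ meeting $\langle V\rangle$ in a point whose ultrafilter would contain $\emptyset$, a contradiction. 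Everything else reduces to routine applications of Proposition~\ref{BBR}, Lemma~\ref{W}, and the definitions.
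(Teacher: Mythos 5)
Your proof is correct and follows essentially the same route as the paper's: the forward implication via the key inclusion $\cl_{W(X)}(\langle V\rangle)\subseteq\langle\cl_X(V)\rangle$ after shrinking $U$ using total $\overline{\kappa}$-normality, and the converse via compactness of $\cl_{W_\kappa(X)}(A)$ (from Proposition~\ref{BBR}) together with regularity. Your auxiliary fact and the explicit replacement of $F$ by $F\cap\cl_X(C)$ merely spell out steps the paper treats as ``without loss of generality,'' so no further comparison is needed.
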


\begin{proof}
($\Rightarrow$) Assume that the space $W_{\kappa}(X)$ is regular. By Proposition~\ref{BBR}, the space $W_{\kappa}(X)$ is $\kappa$-bounded.
To show that the space $X$ is totally $\overline{\kappa}$-normal, take any subset $C\subset X$ of cardinality $|C|\le\kappa$ and two disjoint closed subsets $F,E$ of $X$ such that $F\subset \cl_{X}(C)$. Lemma~\ref{W} implies that $\cl_{W_{\kappa}(X)}(F)\cap \cl_{W_{\kappa}(X)}(E)=\emptyset$.
Since $\cl_{W_{\kappa}(X)}(F)\subset \cl_{W_{\kappa}(X)}(C)$ and $|C|\leq \kappa$, the set $\cl_{W_{\kappa}(X)}(F)$ is compact. Since $W_{\kappa}(X)$ is regular the sets $\cl_{W_{\kappa}(X)}(F)$ and $\cl_{W_{\kappa}(X)}(E)$ have disjoint open neighborhoods $U_1$ and $U_2$, respectively, in $W_{\kappa}(X)$. Put $V_1=U_1\cap X$ and $V_2=U_2\cap X$. Then $V_1$ and $V_2$ are disjoint open neighborhoods (in $X$) of $F$ and $E$, respectively. Hence $X$ is totally $\overline{\kappa}$-normal.

($\Leftarrow$) Assume that $X$ is totally $\overline{\kappa}$-normal. Given any closed ultrafilter $\mathcal{F}\in W_{\kappa}(X)$ and open neighborhood $\langle U\rangle$ of $\mathcal{F}$ find a closed set $F\in \mathcal{F}$ such that $F\subset U$. With no loss of generality we can assume that there exists a subset $C\subset X$ such that $|C|\le\kappa$ and  $F\subset \cl_{X}(C)$.
By the total $\overline{\kappa}$-normality of $X$, there exists an open subset $V$ of $X$ such that $F\subset V\subset \cl_X(V)\subset U$. Then Lemma~\ref{W} implies that
$$\mathcal{F}\in\langle V\rangle\subset \cl_{W_{\kappa}(X)}(\langle V\rangle)=\langle \cl_X(V)\rangle\subset \langle U\rangle$$
 witnessing that the space $W_{\kappa}(X)$ is regular.
\end{proof}

\section{Herrlich extension}
In this section we briefly recall a part of famous construction due to Herrlich~\cite{H} and establish a few important properties of it.

Let $X$ be a topological space and $\rho$ be an equivalence relation on $X$. Then for any $x\in X$ by $[x]$ we denote the equivalence class of the relation $\rho$ which contains $x$ and for any subset $A\subset X$ put $[A]=\{[x]\mid x\in A\}$. Also, we agree to denote $[x]$ ($[A]$, resp.) simply as $x$ ($A$, resp.) if $[x]=\{x\}$ ($[x]=\{x\}$ for each $x\in A$, resp.).

For any distinct elements $a,b$ of a topological space $X$ by $\operatorname{Const}(X)_{a,b}$ we denote the class of $T_1$ spaces such that $f(a)=f(b)$ for any continuous map $f:X\rightarrow Y\in \operatorname{Const}(X)_{a,b}$.
Following~\cite{H}, for any distinct points $a,b$ of a space $X$ it can be constructed a space $H_{a,b}(X)$ such that for any $Y\in \operatorname{Const}(X)_{a,b}$, each continuous function $h:H_{a,b}(X)\rightarrow Y$ is constant. It can be done in two steps.

Step 1. For each topological space $Z$ by $P(Z)$ we denote the set $Z{\times} X$ endowed with the topology $\tau$ which satisfies the following conditions:
\begin{itemize}
\item if $(z,x)\in U\in\tau$, then there exists an open set $V\subset X$ such that $x\in V$ and $\{z\}{\times}V\subset U$;
\item if $(z,a)\in U\in\tau$, then there exists an open set $W\subset Z$ such that $z\in W$ and $W{\times} \{a\}\subset U$.
\end{itemize}
By $X(Z)$ we denote the quotient space $P(Z)/\rho$ where $\rho$ is the smallest equivalence relation satisfying $(z_1,b)\rho (z_2,b)$ for any $z_1,z_2\in Z$. Observe that for any $z\in Z$ the vertical fiber $[\{z\}{\times}X]\subset X(Z)$ is homeomorphic to $X$. Since the map $h(z)=(z,a)$ is a canonical embedding of $Z$ into $X(Z)$, we can identify $Z$ with the subspace $Z{\times}\{a\}\subset X(Z)$. Observe that for each $Y\in \operatorname{Const}(X)_{a,b}$, for any continuous map $f:X(Z)\rightarrow Y$, $f((z,a))=f([z,b])$ where $z$ is an arbitrary element of $Z$.
Hence for any $Y\in \operatorname{Const}(X)_{a,b}$ each continuous map $f:X(Z)\rightarrow Y$ is constant on $Z$.

Step 2. Put $H_1=X$ and for each $n\in\mathbb{N}$ let $H_{n+1}=X(H_n)$. For any $n\in \mathbb{N}$ by $b_n$ we denote the element $[(z,b)]\in H_n$ where $z$ is any element of $H_{n-1}$. Recall that we identify $H_n$ with a subspace $H_n{\times}\{a\}$ of $H_{n+1}$ which implies that $H_n\subset H_{n+1}$ for each $n\in\mathbb{N}$. Finally, by $H_{a,b}(X)$ we denote the set $\cup_{n\in\mathbb{N}}H_n$ endowed with the topology $\tau$ which satisfies the following condition: a subset $U\subset H_{a,b}(X)$ is open in $(H_{a,b}(X),\tau)$ iff the set $U\cap H_n$ is open in $H_n$ for each $n\in \mathbb{N}$. The space $H_{a,b}(X)$ is called a {\em Herrlich extension} of $X$. Fix any $Y\in \operatorname{Const}(X)_{a,b}$. To see that each continuous function $h:H_{a,b}(X)\rightarrow Y$ is constant take any distinct points $x,y\in H_{a,b}(X)$ and observe that there exists $n\in\mathbb{N}$ such that $\{x,y\}\subset H_n$. Recall that any continuous function $g:H_{n+1}\rightarrow Y$ is constant on $H_n$. Hence the restriction of $h$ on the set $H_{n+1}$ is constant on $H_n$ witnessing that $h(x)=h(y)$.


\begin{remark}\label{rem}
The definition of the topology on $H_{a,b}(X)$ (see also~\cite{H}) implies the following:
\begin{itemize}
\item $H_n$ is a closed subset of $H_{a,b}(X)$ for each $n\in\mathbb{N}$;
\item a subset $A\subset H_{a,b}(X)$ is closed iff $A\cap H_n$ is closed in $H_n$ for each $n\in\mathbb{N}$;
\item if $X$ is regular, then so is $H_{a,b}(X)$.
\end{itemize}
\end{remark}

Let $\ka$ be a cardinal. A space $X$ is called {\em $\ka$-accumulative} if $|\overline{A}|\leq \ka$ for each subset $A\subset X$ of cardinality $\leq\ka$.

\begin{proposition}\label{prH}
For a cardinal $\kappa$ the following statements hold:
\begin{itemize}
\item [1)] if $X$ is $\kappa$-accumulative, then so is $H_{a,b}(X)$;
\item [2)] if $X$ is $\kappa$-accumulative and totally $\overline{\kappa}$-normal, then $H_{a,b}(X)$ is totally $\overline{\kappa}$-normal.
\end{itemize}
\end{proposition}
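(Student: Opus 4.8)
The plan is to prove both items in two stages, mirroring the construction: first the single functorial step $Z\mapsto X(Z)$ of Step~1, then the passage to the weak-topology direct limit $H_{a,b}(X)=\bigcup_n H_n$ of Step~2. The computational engine throughout is the description of closures in $P(Z)=Z\times X$ forced by the two defining conditions of $\tau$: for a point $(z,x)$ with $x\ne a$ one has $(z,x)\in\cl_{P(Z)}(A)$ iff $x\in\cl_X(A_z)$, where $A_z$ is the fibre of $A$ over $z$, while $(z,a)\in\cl_{P(Z)}(A)$ iff $a\in\cl_X(A_z)$ or $z\in\cl_Z(A^a)$, where $A^a$ is the $a$-slice of $A$. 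Passing to the quotient $X(Z)=P(Z)/\rho$ only collapses the $b$-slice to one point and leaves these formulas intact away from $b$.

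For item~1) I would first show that if $X$ and $Z$ are both $\kappa$-accumulative then so is $X(Z)$. Given $A\subseteq X(Z)$ with $|A|\le\kappa$, I discard the collapse point, lift the rest to $\widetilde A\subseteq Z\times X$ of size $\le\kappa$, and let $Z_0$ and $X_0$ be its (size $\le\kappa$) projections. The formulas above confine $\cl_{X(Z)}(A)$ to the union of the sets $\{z\}\times\cl_X(\widetilde A_z)$ over $z\in Z_0$, which contributes $\le\kappa\cdot\kappa=\kappa$ points by $\kappa$-accumulativity of $X$, together with the $a$-slice points $[(z,a)]$ with $z\in\cl_Z(A^a)$, which contribute $\le\kappa$ points by $\kappa$-accumulativity of $Z$; hence $|\cl_{X(Z)}(A)|\le\kappa$. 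Since $H_1=X$, the recursion $H_{n+1}=X(H_n)$ then gives by induction that every $H_n$ is $\kappa$-accumulative.

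The genuine difficulty in item~1) is the limit step, and it is the first place I expect trouble. Writing $A_n=A\cap H_n$ and $C_n=\cl_{H_n}(A_n)=\cl_H(A_n)$ (the last equality because each $H_n$ is closed in $H_{a,b}(X)$ by Remark~\ref{rem}), each $C_n$ is closed of size $\le\kappa$, but in the weak topology $\cl_H(A)$ can be strictly larger than $\bigcup_n C_n$: an $H_m$-limit $y$ of base points whose vertical fibres carry $A$-points accumulating at $a$ is forced into $\cl_H(A)$ even though it lies in no single $C_n$, because any weak neighbourhood of $y$ must, by openness, contain a vertical fibre over such a base point and hence meet $A$. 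The point to establish is that this forcing is controlled by the levelwise-closure operator $S\mapsto\bigcup_n\cl_{H_n}(S\cap H_n)$: each application keeps cardinality $\le\kappa$ (countably many levels, each $\kappa$-accumulative), a neighbourhood of $y$ that avoids the current set removes exactly the fibre-accumulation obstruction (such an accumulation at $z$ would place $z$ in some $C_n$), and idempotency of $\cl_{H_m}$ absorbs the iterated ``limit of limits''. The delicate part is to verify that this process stabilises while still inside the $\le\kappa$ regime, yielding a level-closed — hence closed — set equal to $\cl_H(A)$, so that $|\cl_H(A)|\le\kappa$.

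For item~2) the same two stages apply with total $\overline{\kappa}$-normality replacing the cardinality bound. One first proves that $X(Z)$ is totally $\overline{\kappa}$-normal when $X,Z$ are $\kappa$-accumulative and totally $\overline{\kappa}$-normal, separating a $\mathcal{C}$-set from a disjoint closed set fibrewise (using total $\overline{\kappa}$-normality of $X$ in each fibre) and along the $a$-slice (using that of $Z$), with $\kappa$-accumulativity keeping the index sets of size $\le\kappa$; induction then gives total $\overline{\kappa}$-normality of every $H_n$. The main obstacle of the whole proposition is the limit step here, because total $\overline{\kappa}$-normality is \emph{one-sided}: only the small side may be thickened, so the usual Urysohn/Tietze induction through the tower is unavailable. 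I would therefore keep the small closed set $A$ (which has size $\le\kappa$ by item~1, as $A\subseteq\cl_H(C)$) fixed on the restricted side and thicken only the closed set $B$, building increasing open $V_n\supseteq B\cap H_n$ in $H_n$ with $\cl_{H_n}(V_n)\cap A=\emptyset$ and $\cl_{H_n}(V_n)\subseteq V_{n+1}$, each step separating the $\mathcal{C}$-set $A\cap H_{n+1}$ from the closed set $\cl_{H_n}(V_n)\cup(B\cap H_{n+1})$ by total $\overline{\kappa}$-normality of $H_{n+1}$. Using the elementary fact that an increasing union of sets open in successive $H_n$ is open in the weak topology, $V:=\bigcup_n V_n$ is open, and $U:=H_{a,b}(X)\setminus\cl_H(V)$ together with $V$ are the required disjoint open sets once one checks $A\cap\cl_H(V)=\emptyset$. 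Verifying this last disjointness is exactly the closure analysis of item~1 reused: $\cl_H(V)\cap H_m$ is governed by the levelwise closures $\cl_{H_n}(V_n)$, all disjoint from $A$, and the same fibre-forcing/idempotency argument shows no new closure point of $V$ can land on $A$.
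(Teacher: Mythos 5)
Your treatment of the finite stages (the induction on $H_n$) matches the paper's: the same fibrewise-closure plus base-closure decomposition for item 1), and the same fibrewise separation for item 2). The divergence, and the problem, is at the limit stage. The paper simply asserts, citing Remark~\ref{rem}, that $\cl_{H_{a,b}(X)}(B)=\bigcup_{n}\cl_{H_n}(B\cap H_n)$; you rightly doubt this identity --- the right-hand side intersected with $H_m$ is an increasing countable union of closed subsets of $H_m$, which need not be closed (for instance, over base points $z_i$ converging to some $z\in H_1$ place a sequence accumulating at $a$ but only at level $i+1$; then each $z_i$ enters $\cl_{H_{i+1}}(B\cap H_{i+1})$ while $z$ lies in no $\cl_{H_n}(B\cap H_n)$ and yet lies in $\cl_{H_{a,b}(X)}(B)$). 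So your diagnosis of where the difficulty sits is sharper than the paper's write-up. But your repair is not a proof: you reduce everything to the claim that the levelwise-closure operator $\Phi(S)=\bigcup_n\cl_{H_n}(S\cap H_n)$, iterated, ``stabilises while still inside the $\le\kappa$ regime,'' and you explicitly leave that verification open. That verification \emph{is} the limit step. Idempotency of each $\cl_{H_n}$ does not help: $\Phi$ itself is not idempotent (your own fibre-forcing discussion exhibits $\Phi^2\ne\Phi$), each further application can add new points, and at limit ordinals $\Phi$ need not commute with increasing unions, so a priori the iteration could continue strictly for $\kappa^+$ steps and yield a closure of cardinality $\kappa^+$. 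Ruling this out requires an actual argument (a tightness bound on the $H_n$ in terms of $\kappa$, or a direct description of $\cl_{H_{a,b}(X)}(B)\cap H_m$), and none is supplied.

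The same gap propagates into your item 2): the openness of $V=\bigcup_n V_n$ is fine, but the final disjointness $A\cap\cl_{H_{a,b}(X)}(V)=\emptyset$ is again deferred to ``the closure analysis of item 1 reused,'' i.e.\ to the unproved stabilisation claim. (Your one-sided bookkeeping --- thickening only $B$ and demanding $\cl_{H_n}(V_n)\subset V_{n+1}$ --- is a legitimate, slightly different organisation from the paper's, which thickens the small closed set inside a given open superset; the two are equivalent reformulations of total $\overline{\kappa}$-normality, and both stand or fall with the closure computation.) As it stands, the proposal correctly identifies the key difficulty of the proposition but does not overcome it.
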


\begin{proof}
Consider statement 1. Observe that $H_1=X$ is $\kappa$-accumulative by the assumption. Assume that $H_{k-1}$ is $\kappa$-accumulative and consider a subset $A\subset H_k$ of cardinality $\leq \kappa$. For each $h\in H_{k-1}$ by $X_h$ we denote the vertical fiber $\{h\}{\times}(X\setminus\{b\})\cup\{b_n\}\subset H_k$ which is homeomorphic to $X$ and hence it is $\kappa$-accumulative. Put $A_1=\{h\in H_{k-1}\mid X_h\cap A\setminus \{b_k\}\neq\emptyset\}$. Since $|A|\leq \kappa$ the set $A_1$ has cardinality $\leq\kappa$ as well.
Put $A_2=\cup\{\cl_{X_h}(A\cap X_h)\mid h\in A_1\}$. Since for each $h\in H_{n-1}$ the subspace $X_h$ is $\kappa$-accumulative, the set $A_2$ has cardinality $\leq \kappa$. Let $A_3=\cl_{H_{k-1}}(A_1)$. Since $H_{k-1}$ is $\kappa$-accumulative and $|A_1|\leq \kappa$, the set $A_3$ has cardinality $\leq \kappa$. Finally, the definition of the topology on $H_k$ implies that
$\cl_{H_k}(A)\subset A_2\cup A_3\cup\{b_k\}$ witnessing that $H_k$ is $\kappa$-accumulative. Hence for each $n\in\mathbb{N}$ the space $H_n$ is $\kappa$-accumulative.

To see that $H_{a,b}(X)$ is $\kappa$-accumulative fix any subset $B\subset H_{a,b}(X)$ of cardinality $\leq \kappa$. By Remark~\ref{rem}, $\cl_{H_{a,b}(X)}(B)=\cup_{n\in\mathbb{N}}\cl_{H_n}(B\cap H_n)$. Since each $H_n$ is $\kappa$-accumulative $|\cl_{H_n}(B\cap H_n)|\leq \kappa$. Hence $|\cl_{H_{a,b}(X)}(B)|\leq \kappa$ witnessing that $H_{a,b}(X)$ is $\kappa$-accumulative.

Consider statement 2. By statement 1, the space $H_{a,b}(X)$ is $\kappa$-accumulative. Then it is clear that for each $n\in\mathbb{N}$ the space $H_n$ is $\kappa$-accumulative as well. Observe that $H_1=X$ is totally $\overline{\kappa}$-normal by the assumption. Assume that $H_{k-1}$ is totally $\overline{\kappa}$-normal and consider a closed subset $A\subset H_k$ of cardinality $\leq \kappa$. Fix any open set $U\subset H_k$ such that $A\subset U$. We will show that there exists an open set $W\subset H_k$ such that $A\subset W\subset \cl_{H_k}(W)\subset U$ which would provide that $H_k$ is totally $\overline{\kappa}$-normal. Since $H_{k-1}$ is totally $\overline{\kappa}$-normal there exists an open subset $V$ of $H_{k-1}$ such that $A\cap H_{k-1}\subset V\subset \cl_{H_{k-1}}(V)\subset U\cap H_{k-1}$.
Moreover, since $U$ is open in $H_k$ for each $h\in V$ there exists an open neighborhood $V_h(a)$ of $a$ in $X_h$ such that $\{h\}{\times} V_h(a)\subset U$. Since $X_h$ is regular we can assume that $\cl_{X_h}(V_h(a))\subset U\cap X_h$ for each $h\in V$. Put $V^*=\cup\{\{h\}{\times} V_h(a)\mid h\in V\}$.
Let $A_1=\{h\in H_{k-1}\mid A\cap X_h\neq\emptyset\}$. Since the set $X_h$ is closed in $H_k$ for each $h\in H_{k-1}$, the set $A\cap X_h$ is closed in $H_k$ as well. Since $X_h$ is totally $\overline{\kappa}$-normal for each $h\in A_1$ there exists an open subset $W_h$ of $X_h$ such that $A\cap X_h\subset W_h\subset \cl_{X_h}(W_h)\subset U\cap X_h$. Since $X_h$ is regular we can also assume that for each $h\in A_1$, $W_h$ satisfies the following condition: if $(h,a)\notin A\cap X_h$, then $(h,a)\notin \cl_{X_h}(W_h)$. Let $W=\cup\{W_h\mid h\in A_1\}\cup V^*$. One can check that $W$ is an open subset of $H_k$ and $A\subset W\subset \cl_{H_k}(W)\subset U$ witnessing that $H_k$ is totally $\overline{\kappa}$-normal. Hence $H_n$ is totally $\overline{\kappa}$-normal for each $n\in\mathbb{N}$.

To see that $H_{a,b}(X)$ is totally $\overline{\kappa}$-normal fix any closed subset $B\subset H_{a,b}(X)$ of cardinality $\leq \kappa$ and an open subset $U\subset H_{a,b}(X)$ such that $B\subset U$. By Remark~\ref{rem}, for each $n\in \mathbb{N}$ the set $B_n=B\cap H_n$ is closed in $H_n$. Next we shall construct a sequence of sets $\{V_n\}_{n\in\mathbb{N}}$ which satisfies the following conditions:
\begin{itemize}
\item $B_n\subset V_n\subset \cl_{H_n}(V_n)\subset H_n\cap U$ for each $n\in\mathbb{N}$;
\item $V_n$ is an open subset of $H_n$ for each $n\in\mathbb{N}$;
\item $V_i\subset V_j$ for each $i\leq j$.
\end{itemize}
Observe that if we construct such a sequence $\{V_n\}_{n\in\mathbb{N}}$, then the set $V=\cup_{n\in\mathbb{N}}V_n$ will be a desired open set in $H_{a,b}(X)$ which contains $B$ and $\cl_{H_{a,b}(X)}(V)\subset U$, witnessing that $H_{a,b}(X)$ is totally $\overline{\kappa}$-normal.

We shall construct the mentioned above family $\{V_n\}_{n\in\mathbb{N}}$ by the induction.
Since $H_1$ is totally $\overline{\kappa}$-normal there exists an open set $W\subset H_1$ such that $B_1\subset W\subset \cl_{H_1}(W)\subset U\cap H_1$. Put $V_1=W$. Assume that we already constructed sets $V_1,\ldots, V_n$. Since $U\cap H_{n+1}$ is an open subset of $H_{n+1}$ and $V_n\subset U\cap H_{n+1}$, the definition of the topology on $H_{n+1}$ and the regularity of $X$ imply that for each $h\in V_n$ there exists an open neighborhood $V_h$ of $a$ in $X$ such that $\{h\}{\times}\cl_{X}(V_h)\subset U$. Let $V_n^{'}=\cup\{V_h\mid h\in V_n\}$. Obviously, $V_n\subset V_n^{'}$ and $V_n^{'}$ is an open subset of $H_{n+1}$. Put $C_{n+1}=B_{n+1}\setminus V_n^{'}$. Recall that $H_{n+1}$ is totally $\overline{\kappa}$-normal. Hence for a closed set $C_{n+1}$ of cardinality $\leq \kappa$ there exists an open subset $W_{n+1}\subset H_{n+1}$ such that $C_{n+1}\subset W_{n+1}\subset \cl_{H_{n+1}}(W_{n+1})\subset U\cap H_{n+1}$. Finally, put $V_{n+1}=V_n^{'}\cup W_{n+1}$. At this point it is easy to check that so defined family $\{V_n\}_{n\in\mathbb{N}}$ satisfies the above conditions.
\end{proof}

\section{Main result}
For any ordinals $\alpha,\beta$ by $[\alpha,\beta)$ ($[\alpha,\beta]$, resp.) we denote the set of all
ordinals $\gamma$ such that $\alpha\leq \gamma <\beta$ ($\alpha\leq \gamma\leq \beta$, resp.).
Further if some ordinal $\alpha$ is considered as a topological space, then $\alpha$ is assumed to carry the order topology.

Before formulating the main result of this paper we prove a few auxiliaries lemmas.

\begin{lemma}\label{l0}
Let $\kappa$ be a cardinal, $\xi$ be a regular cardinal $>\kappa^+$ and $Y$ be a $T_1$ space of pseudo-character $\psi(Y)\leq \kappa$. Then for each continuous map $f:\xi\rightarrow Y$ there exist $y\in Y$ and $\mu\in \xi$ such that $[\mu,\xi)\subset f^{-1}(y)$.
\end{lemma}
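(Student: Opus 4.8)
The plan is to show that $f$ is \emph{locally constant near every ordinal of large cofinality}, and then to globalize this via Fodor's pressing-down lemma. Recall first that a basic neighborhood of a limit ordinal $\alpha<\xi$ in the order topology has the form $(\gamma,\alpha]$ with $\gamma<\alpha$, so continuity of $f$ at $\alpha$ means exactly that every open set $V\ni f(\alpha)$ contains $f((\gamma,\alpha])$ for some $\gamma<\alpha$.

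First I would treat a single ordinal $\alpha<\xi$ with $\operatorname{cf}(\alpha)>\kappa$. Since $\psi(Y)\le\kappa$ and $Y$ is $T_1$, I can write $\{f(\alpha)\}=\bigcap_{i<\kappa}V_i$ with each $V_i$ open in $Y$. For every $i<\kappa$ the set $f^{-1}(V_i)$ is an open neighborhood of $\alpha$, so there is $\gamma_i<\alpha$ with $f((\gamma_i,\alpha])\subset V_i$. Because $\operatorname{cf}(\alpha)>\kappa$, the ordinal $\gamma_\alpha:=\sup_{i<\kappa}\gamma_i$ is still $<\alpha$, and then $f((\gamma_\alpha,\alpha])\subset\bigcap_{i<\kappa}V_i=\{f(\alpha)\}$. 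Thus $f$ is constant, with value $f(\alpha)$, on the tail $(\gamma_\alpha,\alpha]$ lying just below $\alpha$.

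Next I would globalize. Since $\xi$ is regular and $\xi>\kappa^+$, the set $S=\{\alpha<\xi:\operatorname{cf}(\alpha)=\kappa^+\}$ is stationary in $\xi$, and every $\alpha\in S$ satisfies $\operatorname{cf}(\alpha)=\kappa^+>\kappa$, so the previous paragraph applies to each such $\alpha$. The assignment $\alpha\mapsto\gamma_\alpha$ is then a regressive function on $S$, so Fodor's lemma yields a stationary set $S'\subset S$ and a fixed $\gamma^*<\xi$ with $\gamma_\alpha=\gamma^*$ for all $\alpha\in S'$. Hence $f$ is constant with value $f(\alpha)$ on $(\gamma^*,\alpha]$ for every $\alpha\in S'$. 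Any two elements $\alpha_1<\alpha_2$ of $S'$ give overlapping tails $(\gamma^*,\alpha_1]\subset(\gamma^*,\alpha_2]$, which forces $f(\alpha_1)=f(\alpha_2)$; call this common value $y$. As $S'$ is stationary it is cofinal in $\xi$, so $\bigcup_{\alpha\in S'}(\gamma^*,\alpha]=(\gamma^*,\xi)$ and $f$ takes the constant value $y$ throughout. Setting $\mu=\gamma^*+1$ gives $[\mu,\xi)\subset f^{-1}(y)$, as required.

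The key step, and the one carrying all the hypotheses, is the local-constancy argument: it needs $\psi(Y)\le\kappa$ to cut out $\{f(\alpha)\}$ by $\kappa$ open sets, and $\operatorname{cf}(\alpha)>\kappa$ to keep $\sup_{i<\kappa}\gamma_i$ strictly below $\alpha$. The assumptions that $\xi$ is regular and $\xi>\kappa^+$ enter precisely to guarantee that the set of points of cofinality $\kappa^+$ is stationary and that Fodor's lemma is available, while the $T_1$ hypothesis is what makes the intersection of the chosen open sets equal to the singleton $\{f(\alpha)\}$.
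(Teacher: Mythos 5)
Your proof is correct, but it is organized quite differently from the paper's. The paper first isolates the set $B$ of points of $Y$ whose preimage is cofinal in $\xi$, shows $|B|\le 1$, rules out $B=\emptyset$ by a transfinite recursion of length $\kappa^{+}$ producing $\kappa^{+}$ pairwise distinct values of $f$ below a point of cofinality $\kappa^{+}$ (contradicting the local-constancy phenomenon there), and in the case $B=\{y\}$ applies Fodor's lemma once for each of the $\le\kappa$ open sets cutting out $\{y\}$, taking a supremum at the end. You instead prove directly that $f$ is constant on a tail $(\gamma_\alpha,\alpha]$ below \emph{every} ordinal $\alpha$ of cofinality $>\kappa$ (the same pseudo-character-versus-cofinality computation the paper uses inside Case~1), and then make a single application of Fodor to the regressive map $\alpha\mapsto\gamma_\alpha$ on the stationary set of points of cofinality $\kappa^{+}$, gluing the resulting overlapping tails into one global tail. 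This avoids the case distinction and the recursive construction entirely and is arguably cleaner; the hypotheses are used in the same places ($\psi(Y)\le\kappa$ together with $\operatorname{cf}(\alpha)>\kappa$ for local constancy, and $\xi$ regular $>\kappa^{+}$ to make $\{\alpha<\xi:\operatorname{cf}(\alpha)=\kappa^{+}\}$ stationary and Fodor applicable). One cosmetic remark: the fact that $\bigcap_{i<\kappa}V_i=\{f(\alpha)\}$ comes straight from the definition of pseudo-character adopted in the paper rather than from the $T_1$ axiom as such, but this does not affect the argument.
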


\begin{proof}
Let $B=\{y\in Y\mid f^{-1}(y)$ is a cofinal subset of $\xi\}$.
We claim that $|B|\leq 1$. Indeed, if there exist distinct elements $y_1,y_2\in B$, then the sets $f^{-1}(y_1)$ and $f^{-1}(y_2)$ are closed and unbounded in $\xi$ providing that $f^{-1}(y_1)\cap f^{-1}(y_2)\neq \emptyset$ which yields a contradiction.
Hence there are two cases to consider:
\begin{itemize}
\item [1)] the set $B$ is empty;
\item [2)] the set $B$ is singleton;
\end{itemize}

Consider case 1. Put $\alpha_0=0$ and for each $\beta\leq \kappa^+$ put $\alpha_{\beta}=\sup(f^{-1}(f(\alpha_{\beta-1})))+1$ if $\beta$ is a successor ordinal and $\alpha_{\beta}=\sup\{\alpha_{\delta}\mid \delta<\beta\}$ if $\beta$ is a limit ordinal. Since $cf(\xi)>\kappa^+$ the sequence $\{\alpha_{\beta}\mid \beta\leq \kappa^+\}$ is a subset of $\xi$. Observe that for each distinct $\beta_1,\beta_2\in\kappa^++1$, $f(\alpha_{\beta_1})\neq f(\alpha_{\beta_2})$. Put $y=f(\alpha_{\kappa^+})$ and observe that $cf(\alpha_{\kappa^+})=\kappa^+$. Since $\psi(Y)\leq \kappa$ there exists a family $\mathcal{U}$ of cardinality $\leq \kappa$ of open neighborhoods of $y$ such that $\cap \mathcal{U}=\{y\}$. Then for each $U\in \mathcal{U}$, $f^{-1}(U)$ is an open neighborhood of $\alpha_{\kappa^+}$ providing that there exists $\mu_U\in \alpha_{\kappa^+}$ such that $[\mu_U,\alpha_{\kappa^{+}}]\subset f^{-1}(U)$. Since $cf(\alpha_{\kappa^+})>\kappa$ the ordinal $\mu=\sup\{\mu_U\mid U\in \mathcal{U}\}$ belongs to $\alpha_{\kappa^+}$. Hence $[\mu,\alpha_{\kappa^+}]\subset \cap \{f^{-1}(U)\mid U\in\mathcal{U}\}=f^{-1}(y)$. Then for each ordinal $\alpha_{\beta}\in [\mu,\alpha_{\kappa^+}]$, $f(\alpha_{\beta})=f(\alpha_{\kappa^+})$ which implies a contradiction. Hence case 1 is not possible.

Consider case 2. Let $y\in B$. Observe that $f^{-1}(y)$ is closed and unbounded in $\xi$. Fix any family $\mathcal{U}$ of cardinality $\leq \kappa$ of open neighborhoods of $y$ such that $\cap \mathcal{U}=\{y\}$. Since for each $U\in \mathcal{U}$, $f^{-1}(U)$ is an open set which contains $f^{-1}(y)$, for each $\alpha\in f^{-1}(y)$ there exists an ordinal $\mu^U_{\alpha}\in \alpha$ such that $[\mu^U_{\alpha},\alpha]\subset f^{-1}(U)$. For each $U\in\mathcal{U}$ define the map $h_U:f^{-1}(y)\rightarrow \xi$ by $h_U(\alpha)=\mu^U_{\alpha}$, $\alpha\in f^{-1}(y)$. Since $\xi$ is a regular cardinal, for each $U\in\mathcal{U}$ the map $h_U$ satisfies conditions of Fodor's Lemma~\cite[Lemma III.6.14]{Kun}. Hence for each $U\in\mathcal{U}$ there exists an ordinal $\mu_U\in \xi$ and an unbounded (even stationary) in $\xi$ subset $A\subset f^{-1}(y)$ such that $h_U(\alpha)=\mu_U$ for each $\alpha\in A$. At this point it is clear that $[\mu_U,\xi)\subset f^{-1}(U)$ for any $U\in \mathcal{U}$. Since $cf(\xi)>\kappa$ the ordinal $\mu=\sup\{\mu_U\mid U\in\mathcal{U}\}$ belongs to $\xi$. Hence $[\mu,\xi)\subset \cap\{f^{-1}(U)\mid U\in \mathcal{U}\}=f^{-1}(y)$.
\end{proof}


For any distinct cardinals $\alpha,\beta$, by $T_{\alpha,\beta}$ we denote the punctured Tychonoff $(\alpha,\beta)$-plank, i.e., the subspace $(\alpha+1){\times}(\beta+1)\setminus\{(\alpha,\beta)\}$ of the Tychonoff product $(\alpha+1){\times}(\beta+1)$.

\begin{lemma}\label{l1}
Let $\kappa$ be a cardinal and $Y$ be a $T_1$ space such that $\psi(Y)\leq \kappa$. Then for each regular cardinals $\lambda, \xi$ such that $\ka^+<\lambda<\xi$ and for each continuous map $f:T_{\lambda,\xi}\rightarrow Y$ there exist $y\in Y$, $\alpha\in\lambda$ and $\mu\in \xi$ such that $[\alpha,\lambda]{\times}[\mu,\xi]\setminus\{(\lambda,\xi)\}\subset f^{-1}(y)$.
\end{lemma}

\begin{proof}
By Lemma~\ref{l0}, there exist $y\in Y$ and $\alpha\in\lambda$ such that $[\alpha,\lambda){\times}\{\xi\}\subset f^{-1}(y)$.
Since $\psi(Y)\leq \kappa$ there exists a family $\mathcal{U}$ of cardinality $\leq \kappa$ of open neighborhoods of $y$ such that $\cap \mathcal{U}=\{y\}$. The continuity of $f$ implies that for each $U\in \mathcal{U}$ the set $f^{-1}(U)$ is open and contains the set $[\alpha,\lambda){\times}\{\xi\}$. Since $\lambda<cf(\xi)$ for each $U\in\mathcal{U}$ there exists $\mu_U\in \xi$ such that $[\alpha,\lambda){\times}[\mu_U,\xi]\subset f^{-1}(U)$. Since $\kappa<cf(\xi)$ the ordinal $\mu=\sup\{\mu_U\mid U\in \mathcal{U}\}$ belongs to $\xi$. Then $[\alpha,\lambda){\times} [\mu,\xi]\subset \cap\{f^{-1}(U)\mid U\in \mathcal{U}\}=f^{-1}(y)$. Since the set $\{y\}$ is closed in $Y$
$$[\alpha,\lambda]{\times}[\mu,\xi]\setminus\{(\lambda,\xi)\}\subset \overline{[\alpha,\lambda){\times} [\mu,\xi]}\subset \overline{f^{-1}(y)}=f^{-1}(y).$$
\end{proof}

\begin{lemma}\label{l2}
Let $\kappa$ be a cardinal and $\lambda$ be any ordinal $\geq \kappa$. Then $\lambda$ if $\kappa$-accumulative.
\end{lemma}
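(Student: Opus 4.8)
The plan is to verify the definition of $\kappa$-accumulativity directly: I fix a subset $A\subseteq\lambda$ with $|A|\le\kappa$ and bound $|\overline{A}|$ by $\kappa$. The first step is to describe the closure $\overline{A}$ explicitly using the order topology. Every successor ordinal (and $0$) is an isolated point of $\lambda$, so no successor ordinal lying outside $A$ can belong to $\overline{A}$; only limit ordinals can be new accumulation points. For a limit ordinal $\gamma<\lambda$ the intervals $(\beta,\gamma]=(\beta,\gamma+1)$ with $\beta<\gamma$ form a neighbourhood base at $\gamma$, and a short check shows that a limit ordinal $\gamma\notin A$ lies in $\overline{A}$ precisely when $A\cap\gamma$ is cofinal in $\gamma$, i.e.\ $\sup(A\cap\gamma)=\gamma$. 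Thus $\overline{A}\subseteq A\cup D$, where $D$ is the set of limit ordinals $\gamma<\lambda$ with $\sup(A\cap\gamma)=\gamma$, and it remains to bound $|D|$.

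To bound $|D|$ I would use the assignment $\gamma\mapsto A\cap\gamma$, which sends each $\gamma\in D$ to an initial segment of the well-ordered set $A$. This assignment is injective on $D$: if $\gamma_1<\gamma_2$ both lie in $D$ then $\sup(A\cap\gamma_1)=\gamma_1<\gamma_2=\sup(A\cap\gamma_2)$, so the two initial segments have distinct suprema and are therefore distinct. The main point of the argument — and essentially the only place requiring care — is the cardinality count at the end. A naive reading of this injection only embeds $D$ into the power set of $A$, whose size is $2^{\kappa}$ and far exceeds $\kappa$; this is the obstacle one must avoid. The key observation is that the sets $A\cap\gamma$ are not arbitrary subsets of $A$ but \emph{initial segments} of $A$ with respect to its inherited well-order. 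If $\tau$ denotes the order type of $A$, then the well-ordered set $A$ has exactly $\tau+1$ initial segments (one of each order type $\le\tau$), and since $|\tau|=|A|\le\kappa$ with $\kappa$ infinite we get $|\tau+1|\le\kappa$.

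Combining these, $|D|\le\kappa$, and hence $|\overline{A}|\le|A|+|D|\le\kappa$, which is exactly the statement that $\lambda$ is $\kappa$-accumulative. The argument is uniform in $\lambda$; the hypothesis $\lambda\ge\kappa$ merely ensures that the setting is non-degenerate (subsets of size up to $\kappa$ are available), while the substantive content is that closures of small sets in an ordinal are controlled by initial segments of those sets rather than by all of their subsets.
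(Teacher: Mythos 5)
Your proof is correct and follows essentially the same strategy as the paper: both arguments bound the closure by injecting the new limit points of $A$ into a set of size $\leq\kappa$ determined by $A$. The paper's injection is slightly more direct --- it sends each $\alpha\in\overline{A}\setminus(A\cup\{\sup A\})$ to $\min(A\setminus(\alpha+1))\in A$ --- whereas you pass through the initial segments $A\cap\gamma$, but the two counts are interchangeable.
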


\begin{proof}
Fix any subset $X\subset \lambda$ such that $|X|\leq \kappa$. Observe that the map $f:\overline{X}\setminus (X\cup \{\sup X\})\rightarrow X$ defined by $f(\alpha)=\min (X\setminus (\alpha+1))$ is injective. Hence $|\overline{X}|\leq\kappa$ witnessing that $\lambda$ is $\kappa$-accumulative.
\end{proof}

Since the finite Tychonoff product of $\kappa$-accumulative spaces is $\kappa$-accumulative and a subspace of a $\kappa$-accumulative space is $\kappa$-accumulative, Lemma~\ref{l2} implies the following:

\begin{corollary}\label{ck}
For any cardinals $\alpha,\beta, \kappa$ such that $\min\{\alpha,\beta\}\geq \kappa$ the space $T_{\alpha,\beta}$ is $\kappa$-accumulative.
\end{corollary}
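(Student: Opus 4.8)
The plan is to reduce the statement to Lemma~\ref{l2} via two general stability properties of $\kappa$-accumulative spaces: closedness under finite products and under passing to subspaces. Since $\min\{\alpha,\beta\}\geq\kappa$, both ordinals $\alpha+1$ and $\beta+1$ are $\geq\kappa$, so Lemma~\ref{l2} gives that the ordinal spaces $\alpha+1$ and $\beta+1$ (with the order topology) are each $\kappa$-accumulative. If I can show that the product $(\alpha+1){\times}(\beta+1)$ is then $\kappa$-accumulative and that $T_{\alpha,\beta}$, being a subspace of this product, inherits the property, I am done.

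First I would verify the subspace property, which is immediate: if $X$ is $\kappa$-accumulative and $S\subset X$, then for any $A\subset S$ with $|A|\le\kappa$ one has $\cl_S(A)=\cl_X(A)\cap S\subset \cl_X(A)$, and since $|\cl_X(A)|\le\kappa$ it follows that $|\cl_S(A)|\le\kappa$.

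The main step is the product property, which by an obvious induction it suffices to establish for two factors. Given $\kappa$-accumulative spaces $X,Y$ and a subset $A\subset X{\times}Y$ with $|A|\le\kappa$, I would pass to the coordinate projections $\pi_X(A)\subset X$ and $\pi_Y(A)\subset Y$, each of cardinality $\le\kappa$, and use the inclusion $A\subset\pi_X(A){\times}\pi_Y(A)$ together with the standard identity $\cl_{X\times Y}(\pi_X(A){\times}\pi_Y(A))=\cl_X(\pi_X(A)){\times}\cl_Y(\pi_Y(A))$ for the product topology. By $\kappa$-accumulativity of the factors, both closures on the right have cardinality $\le\kappa$, so by monotonicity of closure $|\cl_{X\times Y}(A)|\le\kappa\cdot\kappa=\kappa$, the last equality holding because $\kappa$ is infinite.

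Finally, applying the product property to $\alpha+1$ and $\beta+1$ and then the subspace property to $T_{\alpha,\beta}=(\alpha+1){\times}(\beta+1)\setminus\{(\alpha,\beta)\}$ yields the claim. I do not expect any genuine obstacle here: the only points requiring care are the identification of the closure of a box product with the product of closures and the infinite cardinal arithmetic $\kappa\cdot\kappa=\kappa$, both of which are standard.
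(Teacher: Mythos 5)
Your proof is correct and follows exactly the route the paper takes: it derives the corollary from Lemma~\ref{l2} together with the stability of $\kappa$-accumulativity under finite products and subspaces, which is precisely what the sentence preceding the corollary in the paper asserts (without proof). Your verification of the two stability properties, including the identity $\cl_{X\times Y}(B\times C)=\cl_X(B)\times\cl_Y(C)$, is sound.
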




The following Theorem resolves Problem~\ref{p1} and Problem~\ref{p2} and is the main result of this paper.

\begin{theorem}\label{main}
For each cardinal $\ka$ there exists a regular infinite $\ka$-bounded space $R_{\kappa}$ such that for any $T_1$ space $Y$ of pseudo-character $\psi(Y)\leq \kappa$ each continuous map $f:R_{\kappa}\rightarrow Y$ is constant.
\end{theorem}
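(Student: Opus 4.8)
The plan is to build $R_\kappa$ in three layers: a carefully chosen base space $X$ carrying two points $a\neq b$ whose images coincide under every continuous map into a $T_1$ space of pseudo-character $\leq\kappa$; the Herrlich extension $H_{a,b}(X)$, which upgrades ``the images of $a$ and $b$ agree'' into ``every continuous map is constant''; and finally the Wallman $\kappa$-bounded extension, which makes the result $\kappa$-bounded without spoiling regularity or constancy. Concretely I would set $R_\kappa=W_\kappa(H_{a,b}(X))$.

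For the base, fix regular cardinals $\lambda,\xi$ with $\kappa^+<\lambda<\xi$ (for instance $\lambda=\kappa^{++}$, $\xi=\kappa^{+++}$, which are regular) and let $X$ be obtained from the punctured plank $T_{\lambda,\xi}$ by splitting its missing corner $(\lambda,\xi)$ into two points: a point $a$ declared to be the limit of the vertical edge $\{\lambda\}\times[0,\xi)$ (basic neighborhoods $\{a\}\cup(\{\lambda\}\times(\mu,\xi))$, $\mu<\xi$) and a point $b$ declared to be the limit of the horizontal edge $[0,\lambda)\times\{\xi\}$ (basic neighborhoods $\{b\}\cup([\alpha,\lambda)\times\{\xi\})$, $\alpha<\lambda$). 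The first task is to check that $X$ is regular, $\kappa$-accumulative, and $\kappa$-bounded. Regularity is a direct verification at $a$, at $b$, and at the plank points, using that each of $a,b$ has a neighborhood base whose members have closures that are again edge-tails; $\kappa$-accumulativity follows from Corollary~\ref{ck} since closures acquire at most the two new points; and $\kappa$-boundedness holds because, $\lambda$ and $\xi$ being regular and $>\kappa$, any subset of size $\leq\kappa$ is bounded away from the corner along both coordinates and along both edges, so its closure is a finite union of compact ordinal blocks and is disjoint from $a,b$ unless these already belong to it. By Proposition~\ref{BBR1}, the $\kappa$-bounded regular space $X$ is then totally $\overline{\kappa}$-normal.

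Next I would establish the forcing property $\{Y:Y\text{ is }T_1,\ \psi(Y)\leq\kappa\}\subseteq\operatorname{Const}(X)_{a,b}$, which is the conceptual heart. Given such $Y$ and a continuous $f:X\to Y$, restrict to the plank and apply Lemma~\ref{l1}: there are $y\in Y$, $\alpha<\lambda$ and $\mu<\xi$ with $[\alpha,\lambda]\times[\mu,\xi]\setminus\{(\lambda,\xi)\}\subseteq f^{-1}(y)$. In particular $f$ is constantly $y$ on a tail of the vertical edge and on a tail of the horizontal edge. Since $a$ is the limit of the vertical edge and $b$ the limit of the horizontal edge, continuity together with the $T_1$ axiom forces $f(a)=y=f(b)$, because the constant image filter at $y$ can converge only to $y$. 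Hence $Y\in\operatorname{Const}(X)_{a,b}$.

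Finally I would assemble the layers. By the Herrlich construction recalled above, every continuous map from $H_{a,b}(X)$ into any $Y\in\operatorname{Const}(X)_{a,b}$ is constant; by Remark~\ref{rem} and Proposition~\ref{prH}, $H_{a,b}(X)$ inherits regularity, $\kappa$-accumulativity, and total $\overline{\kappa}$-normality from $X$. Setting $R_\kappa=W_\kappa(H_{a,b}(X))$, Proposition~\ref{BBR} yields $\kappa$-boundedness and Proposition~\ref{Wk} yields regularity (using total $\overline{\kappa}$-normality of $H_{a,b}(X)$), while $R_\kappa$ is infinite as it contains a copy of $X$. For constancy, if $g:R_\kappa\to Y$ is continuous with $Y$ a $T_1$ space of pseudo-character $\leq\kappa$, then the restriction of $g$ to the dense copy of $H_{a,b}(X)$ is constant, say $\equiv c$; since $\{c\}$ is closed in the $T_1$ space $Y$, the set $g^{-1}(c)$ is closed and contains a dense subset, hence equals $R_\kappa$, so $g$ is constant. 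The main obstacle is the base layer: producing a single space $X$ that is simultaneously regular, $\kappa$-bounded and $\kappa$-accumulative while still forcing $f(a)=f(b)$ for all admissible $Y$; once $X$ is in hand, the two extensions and the density argument are essentially bookkeeping on top of the cited propositions.
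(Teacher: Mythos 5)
Your three-layer architecture (a base space $X$ with two points $a,b$ forced to have equal images, then the Herrlich extension, then the Wallman $\kappa$-bounded extension) is exactly the paper's, and your final assembly — Proposition~\ref{prH} for total $\overline{\kappa}$-normality of $H_{a,b}(X)$, Propositions~\ref{BBR} and~\ref{Wk} for the Wallman layer, and the density argument for constancy — is correct. The gap is in the base space, which you yourself identify as the main obstacle: attaching $a$ as a limit of the vertical edge and $b$ as a limit of the horizontal edge of a \emph{single} punctured plank cannot produce a space that is simultaneously regular and has the forcing property. If the basic neighborhoods of $a$ are literally the sets $\{a\}\cup(\{\lambda\}{\times}(\mu,\xi))$, then these sets are clopen in $X$, the subspace topology induced on $T_{\lambda,\xi}$ is strictly finer than the product topology (so Lemma~\ref{l1} no longer applies to restrictions of continuous maps on $X$), and the characteristic function of such a clopen tail is a continuous map into the two-point discrete space with $f(a)\neq f(b)$. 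If instead a neighborhood of $a$ is required to be an open set of the plank containing a vertical tail (the only way to keep the plank's topology, hence Lemma~\ref{l1}, available), then the classical non-normality argument for the Tychonoff plank applies: any open set containing $\{\lambda\}{\times}(\mu,\xi)$ contains a full column $\{\alpha_0+1\}{\times}S$ for some $\alpha_0<\lambda$ and some cofinal $S\subset\xi$ (since $cf(\xi)>\lambda$), and hence meets every open set containing a tail of the horizontal edge. So every neighborhood of $a$ meets every neighborhood of $b$; since $\{b\}$ is closed in your $X$, the space is not even Hausdorff at $a$ versus $b$, let alone regular.

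This is precisely the obstruction the paper's construction is designed to avoid: it takes $\mathbb{Z}$-many copies $T_n$ of the plank, glues the horizontal edge of $T_{2n}$ to that of $T_{2n+1}$ and the vertical edge of $T_{2n}$ to that of $T_{2n-1}$, and lets $a$ and $b$ be the limits of the two clopen ends $\bigcup_{k>n}T_{-k}$ and $\bigcup_{k>n}T_{k}$ of the chain. Then $a$ and $b$ have disjoint clopen neighborhoods (so regularity survives), while Lemma~\ref{l1} applied to each copy, together with the edge identifications, propagates a single value $y$ through all the copies into a set $D$ whose closure contains both $a$ and $b$, giving $f(a)=f(b)$. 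Your proof needs to be repaired by replacing the single-plank base space with some such chain (or another device that keeps $a$ and $b$ topologically far apart while linking the tails on which $f$ is constant).
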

\begin{proof}
Fix any cardinal $\kappa$. By $T$ we denote the punctured Tychonoff plank $T_{\kappa^{++},\kappa^{+++}}$. Observe that $T$ is $\kappa$-bounded and $\kappa$-accumulative. Let $\mathbb{Z}$ be a discrete set of integers and $a,b$ be distinct points which do not belong to $T{\times}\mathbb Z$. By $R$ we denote the set $T{\times}\mathbb Z\cup\{a,b\}$ endowed with the topology $\tau$ which satisfies the following conditions:
\begin{itemize}
\item the Tychonoff product $T{\times}\mathbb Z$ is open in $R$;
\item if $a\in U\in\tau$, then there exists $n\in\mathbb N$ such that $\{(t,-k)\mid t\in T$ and $k>n\}\subset U$;
\item if $b\in U\in\tau$, then there exists $n\in\mathbb N$ such that $\{(t,k)\mid t\in T$ and $k>n\}\subset U$.
\end{itemize}
One can check that the space $R$ is regular, $\kappa$-bounded and $\kappa$-accumulative (see Corollary~\ref{ck}). For convenience, we denote the subset $T{\times}\{n\}\subset R$ by $T_n$ for each $n\in \mathbb Z$.

On the space $R$ consider the smallest equivalence relation $\sim$ such that
$$(x,\kappa^{+++},2n)\sim (x,\kappa^{+++},2n+1)\hbox{ and } (\kappa^{++},y,2n)\sim(\kappa^{++},y,2n-1)$$
for any $n\in \mathbb Z$, $x\in\kappa^{++}$ and $y\in\kappa^{+++}$.

Let $X$ be the quotient space $R/\sim$ of $R$ by the equivalence relation $\sim$. Being the continuous image of the $\kappa$-bounded space $R$ the space $X$ is $\kappa$-bounded as well. It is straightforward to check that $X$ is regular and hence, by Proposition~\ref{BBR1}, $X$ is totally $\overline{\kappa}$-normal. Also, one can check that $X$ is $\kappa$-accumulative. We claim that for each $T_1$ space $Y$ of pseudo-character $\psi(Y)\leq \kappa$ and for each continuous map $f:X\rightarrow Y$, $f(a)=f(b)$.
Indeed, fix any space $Y$ such that $\psi(Y)\leq \kappa$ and let $f:X\rightarrow Y$ be any continuous map. Observe that for any $n\in \mathbb Z$ the subspace $[T_n]=\{[x]\mid x\in T_n\}\subset X$ is homeomorphic to $T_n$. By Lemma~\ref{l1}, for each $n\in \mathbb Z$ there exist $y_n\in Y$ and ordinals $\alpha_n\in\kappa^{++}$, $\beta_n\in \kappa^{+++}$ such that
$$\{[(x,\kappa^{+++},n)]\mid x\in [\alpha_n,\kappa^{++})\}\subset f^{-1}(y_n) \hbox{ and } \{[(\kappa^{++},y,n)]\mid y\in [\beta_n,\kappa^{+++})\}\subset f^{-1}(y_n).$$
Recall that $[(x,\kappa^{+++},2n)]=[(x,\kappa^{+++},2n+1)]$ and $[(\kappa^{++},y,2n)]=[(\kappa^{++},y,2n-1)]$ for any $n\in \mathbb Z$, $x\in\kappa^{++}$ and $y\in\kappa^{+++}$ which implies that $y_{2n-1}=y_{2n}=y_{2n+1}$, $n\in \mathbb Z$. Put $\alpha=\sup\{\alpha_n\mid n\in \mathbb Z\}$ and $\beta=\sup\{\beta_n\mid n\in \mathbb Z\}$. Hence there exists a unique $y\in Y$ such that
$$D=\{[(x,\kappa^{+++},n)]\mid x\in [\alpha,\kappa^{++}), n\in\mathbb{Z}\}\cup \{[(\kappa^{++},y,n)]\mid y\in [\beta,\kappa^{+++}),n\in\mathbb{Z}\}\subset f^{-1}(y).$$
Observe that $\{a,b\}\subset \overline{D}\subset \overline{f^{-1}(y)}=f^{-1}(y)$. Hence $f(a)=f(b)$.

Next consider the Herrlich extension $H_{a,b}(X)$ of $X$. Recall that for any space $Y$ of pseudo-character $\psi(Y)\leq \kappa$ each continuous function $f:H_{a,b}(X)\rightarrow Y$ is constant. Since $X$ is totally $\overline{\kappa}$-normal and $\kappa$-accumulative, Proposition~\ref{prH} implies that $H_{a,b}(X)$ is totally $\overline{\kappa}$-normal.

Finally, let $R_{\kappa}$ be the Wallman $\kappa$-bounded extension $W_{\kappa}(H_{a,b}(X))$ of $H_{a,b}(X)$. Recall that $H_{a,b}(X)$ is a dense subspace of $R_{\kappa}$. Hence for any $T_1$ space $Y$ of pseudo-character $\psi(Y)\leq \kappa$ each continuous function $f:R_{\kappa}\rightarrow Y$ is constant on the dense subspace $H_{a,b}(X)\subset R_{\kappa}$ witnessing that $f$ is constant on the whole $R_{\kappa}$. By Proposition~\ref{BBR}, the space $R_{\kappa}$ is $\kappa$-bounded. Proposition~\ref{Wk} implies that $R_{\kappa}$ is regular.
\end{proof}

The next theorem shows that the main result never holds if the space $Y$ is not $T_1$.

\begin{theorem}\label{main1} Let $X$ be a non-anti-discrete space and $Y$ be a space which is not $T_1$.
Then there exists a continuous non-constant
map $f:X\rightarrow Y$.
\end{theorem}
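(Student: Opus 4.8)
The plan is to build $f$ as a two-valued map whose two fibers are a fixed nonempty proper open subset of $X$ and its complement. First I would unpack the two hypotheses. Since $X$ is non-anti-discrete, there exists an open set $U\subset X$ with $U\neq\emptyset$ and $U\neq X$; in particular both $U$ and $X\setminus U$ are nonempty. Since $Y$ is not $T_1$, some singleton fails to be closed, which means there are distinct points $p,q\in Y$ such that every open subset of $Y$ containing $p$ also contains $q$ (equivalently, $p\in\cl_Y(\{q\})$). This asymmetry between $p$ and $q$ is exactly what the construction will exploit.

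Next I would define $f:X\rightarrow Y$ by $f(x)=q$ for $x\in U$ and $f(x)=p$ for $x\in X\setminus U$. The choice of \emph{which} point is sent to the open set $U$ is the crucial part: the ``absorbed'' point $q$ (the one lying in every neighborhood of $p$) must be the value on the open fiber $U$, and $p$ the value on the closed fiber $X\setminus U$. Non-constancy is then immediate, since $U$ and $X\setminus U$ are both nonempty and $p\neq q$.

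It remains to verify continuity by computing $f^{-1}(V)$ for an arbitrary open $V\subset Y$. Because the image of $f$ lies in $\{p,q\}$, only the membership of $p$ and $q$ in $V$ matters, so I would split into three cases. If $p\in V$, then by the choice of $p,q$ also $q\in V$, whence $f^{-1}(V)=X$. If $p\notin V$ but $q\in V$, then $f^{-1}(V)=U$. If neither point lies in $V$, then $f^{-1}(V)=\emptyset$. In every case the preimage is open, so $f$ is continuous, completing the argument.

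The proof is short, and the only genuine subtlety --- the point I would flag as the crux --- is getting the orientation of the assignment right. Had I instead sent $U$ to $p$ and $X\setminus U$ to $q$, the second case would force $f^{-1}(V)=X\setminus U$, which is merely closed and need not be open, so continuity could fail. The correct reading of ``not $T_1$'' as furnishing a pair $(p,q)$ with every neighborhood of $p$ swallowing $q$, together with mapping the open fiber to $q$, is precisely what makes all three preimages open.
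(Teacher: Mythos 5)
Your proof is correct and is essentially the paper's argument: both extract a nonempty proper open set $U\subset X$ and a pair of points with one in the closure of the other's singleton, then define the two-valued map sending the open fiber to the ``absorbed'' point (your $q$, the paper's $p_1$) and the complement to the other. The only difference is notational, and your extra remark about why the orientation matters is a correct elaboration of the same construction.
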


\begin{proof}
Since the space $Y$ is not $T_1$ it contains points $p_1$
and $p_2$ such that $p_2\in \overline{\{p_1\}}$. Since $X$ is not anti-discrete it contains a proper open subset $U$. For each $a\in U$ put $f(a)=p_1$ and
for each $b\in X\setminus U$ put $f(b)=p_2$. It is clear that the map $f$ is continuous
and $f(a)\neq f(b)$ for any $a\in U$ and $b\in X\setminus U$.
\end{proof}

Theorem~\ref{main} and Theorem~\ref{main1} provide the following analogue of Theorem~\ref{Her}.

\begin{theorem}
A space $Y$ is $T_1$ if and only if for any cardinal $\kappa$ there exists an infinite regular $\kappa$-bounded space $R_{\kappa}$ such that any continuous map $f:R_k\rightarrow Y$ is constant.
\end{theorem}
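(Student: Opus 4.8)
The plan is to read off the biconditional directly from the two results just proved: Theorem~\ref{main} delivers the forward implication and Theorem~\ref{main1} the reverse one, so that the only real work is a small bookkeeping argument reconciling the index $\kappa$ with the pseudo-character of $Y$.

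For the forward implication, suppose $Y$ is $T_1$ and fix an arbitrary cardinal $\kappa$. The space furnished by Theorem~\ref{main} kills only maps into $T_1$ targets whose pseudo-character is bounded by its index, whereas $\psi(Y)$ may well exceed $\kappa$. I would therefore pass to $\mu=\max\{\kappa,\psi(Y)\}$, which is an infinite cardinal with $\mu\ge\kappa$ and $\psi(Y)\le\mu$, and apply Theorem~\ref{main} at the level $\mu$ to obtain a regular infinite $\mu$-bounded space $R_\mu$ on which every continuous map into a $T_1$ space of pseudo-character $\le\mu$ is constant. Since $\psi(Y)\le\mu$, each continuous $f:R_\mu\to Y$ is then constant. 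The key observation is that $\mu$-boundedness implies $\kappa$-boundedness whenever $\kappa\le\mu$, because a set of cardinality $\le\kappa$ has cardinality $\le\mu$ and hence compact closure; thus $R_\mu$ is in particular $\kappa$-bounded, and setting $R_\kappa:=R_\mu$ finishes this direction.

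For the reverse implication I would argue by contraposition. Assume $Y$ is not $T_1$, and let $\kappa$ be any cardinal and $X$ any infinite regular $\kappa$-bounded space. Being infinite and regular, $X$ is $T_1$ and therefore non-anti-discrete, so Theorem~\ref{main1} supplies a non-constant continuous map $X\to Y$. Consequently no infinite regular $\kappa$-bounded space has the property that all continuous maps into $Y$ are constant; as $\kappa$ was arbitrary, the existence condition fails outright, which is exactly what contraposition requires. Hence the existence condition forces $Y$ to be $T_1$.

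Neither direction presents a genuine obstacle once the preceding theorems are available; the only point demanding care is the pseudo-character mismatch in the forward direction, resolved by the monotonicity of $\kappa$-boundedness noted above. I would also make explicit, in the reverse direction, that the convention ``regular'' is taken to include $T_1$, so that an infinite regular space is non-anti-discrete and Theorem~\ref{main1} genuinely applies; without this the indiscrete space would furnish a spurious counterexample.
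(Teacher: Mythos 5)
Your proposal is correct and follows the paper's intended route: the paper states no explicit proof, merely asserting that the theorem follows from Theorem~\ref{main} and Theorem~\ref{main1}, and your argument is precisely that derivation. You also correctly supply the one detail the paper leaves implicit in the forward direction --- passing to $\mu=\max\{\kappa,\psi(Y)\}$ and using that $\mu$-boundedness implies $\kappa$-boundedness for $\kappa\leq\mu$ --- as well as the observation that the paper's convention (following Engelking) makes regular spaces $T_1$, so an infinite regular space is not anti-discrete and Theorem~\ref{main1} applies.
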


\section*{Acknowledgements}
The authors acknowledge Lyubomyr Zdomskyy for his fruitful comments and suggestions. The work reported here
was carried out during the visit of the second named author to the
KGRC in Vienna. He wishes to thank his colleagues in Vienna.

\end{document}